 %%%%%%%%%%%%%%%%%%%%%%%%%%% asme2e.tex %%%%%%%%%%%%%%%%%%%%%%%%%%%%%%%
% Template for producing ASME-format articles using LaTeX            %
% Written by   Harry H. Cheng                                        %
%              Integration Engineering Laboratory                    %
%              Department of Mechanical and Aeronautical Engineering %
%              University of California                              %
%              Davis, CA 95616                                       %
%              Tel: (530) 752-5020 (office)                          %
%                   (530) 752-1028 (lab)                             %
%              Fax: (530) 752-4158                                   %
%              Email: hhcheng@ucdavis.edu                            %
%              WWW:   http://iel.ucdavis.edu/people/cheng.html       %
%              May 7, 1994                                           %
% Modified: February 16, 2001 by Harry H. Cheng                      %
% Modified: January  01, 2003 by Geoffrey R. Shiflett                %
% Use at your own risk, send complaints to /dev/null                 %
%%%%%%%%%%%%%%%%%%%%%%%%%%%%%%%%%%%%%%%%%%%%%%%%%%%%%%%%%%%%%%%%%%%%%%

%%% use twocolumn and 10pt options with the asme2e format
\documentclass[twocolumn,10pt]{asme2e}
\addtolength{\textwidth}{-1pt}
\usepackage{amsmath,amssymb,amsfonts}
\usepackage{graphicx}
\graphicspath{ {./image/} }
\usepackage{unicode}
\newtheorem{proposition}{Proposition}
\usepackage[dvipsnames]{xcolor}
\usepackage{bbold}
\usepackage{xcolor}
\usepackage{hyperref}

\urlstyle{same}

\usepackage[ruled,vlined]{algorithm2e}

%% The class has several options
%  onecolumn/twocolumn - format for one or two columns per page
%  10pt/11pt/12pt - use 10, 11, or 12 point font
%  oneside/twoside - format for oneside/twosided printing
%  final/draft - format for final/draft copy
%  cleanfoot - take out copyright info in footer leave page number
%  cleanhead - take out the conference banner on the title page
%  titlepage/notitlepage - put in titlepage or leave out titlepage
%  
%% The default is oneside, onecolumn, 10pt, final

%%% Replace here with information related to your conference
\confshortname{the ASME 2020}
\conffullname{Dynamic Systems and Control Conference \\
              DSCC2020}
\confdate{4-7}
\confmonth{October}
%%%%% for date across two months, use
%\confdate{October 31-November 3}
%%%%\confmonth{September}
\confyear{2020}
\confcity{Pittsburgh, PA}
\confcountry{USA}

%%% Replace DETC2005-12345 with the number supplied to you 
%%% by ASME for your paper.
\papernum{DSCC2020-3139}

\title{Attack-resilient observer pruning for path-tracking control of Wheeled Mobile Robot}

%%% first author
\author{Yu Zheng, Olugbenga Moses Anubi
    \affiliation{
	FAMU-FSU College of Engineering\\
	Tallahassee, FL 32310, USA\\
    Email: yz19b@fsu.edu, oanubi@fsu.edu
    }	
}

%%% second author
%%% remove the following entry for single author papers
%%% add more entries for additional authors
%\author{First Coauthor\thanks{Address all correspondence to this author.} \\
%       {\tensfb Second Coauthor}     
%    \affiliation{Department or Division Name\\
%	Company or College Name\\
%	City, State (spelled out), Zip Code\\
%	Country (only if not U.S.)\\
%	Email address (if available)
%    }
%}

\begin{document}

\maketitle    

%%%%%%%%%%%%%%%%%%%%%%%%%%%%%%%%%%%%%%%%%%%%%%%%%%%%%%%%%%%%%%%%%%%%%%
\begin{abstract}
{\it Path-tracking control of wheeled mobile robot (WMR) has gained a lot of research attention, primarily because of its wide applicability -- for example intelligent wheelchairs, exploration-assistant remote WMR. Recent increase in remote and autonomous operations\slash requirements for WMR has led to more and more use of IoT devices within the control loop. Consequently, providing interfaces for malicious interactions through false data injection attacks (FDIA). Moreover, optimization-based FDIAs have been shown to cause catastrophic consequences in feedback control systems while by-passing any residual-based monitoring system. Since these attacks target system measurement process, this paper focuses on the problem of improving the resiliency of dynamical observers against FDIA. Specifically, we propose an attack-resilient pruning algorithm which attempts to exclude compromised channels from being processed by the observer. The proposed pruning algorithm improves attack-localization precision to $100\%$ with high probability, which correspondingly improves the resiliency of the underlying UKF to FDIA. The improvements due to the developed resilient pruning-based observer is validated through a numerical simulation of a two-layer path-tracking control platform of differential-driven wheeled mobile robot (DDWMR) under FDIA.} 
\end{abstract}

%%%%%%%%%%%%%%%%%%%%%%%%%%%%%%%%%%%%%%%%%%%%%%%%%%%%%%%%%%%%%%%%%%%%%%
\begin{nomenclature}
%\entry{${\mathbb R}, {\mathbb R}_{+}, {\mathbb R}^n, {\mathbb R}^{n \times m}$}{denote the space of real numbers, positive real numbers, real vectors of length n and real matrices of n rows and m columns respectively.}
The following notations and definitions are used throughout the whole paper: ${\mathbb R}, {\mathbb R}^n, {\mathbb R}^{n \times m}$ denote the space of real numbers, real vectors of length $n$ and real matrices of $n$ rows and $m$ columns respectively. ${\mathbb R}_{+}$ denotes positive real numbers. Normal-face lower-case letters $(e.g.\hspace{1mm} x \in {\mathbb R})$ are used to represent real scalar, bold-face lower-case letter $(e.g.\hspace{1mm} \mathbf{x} \in {\mathbb R}^n)$ represents vectors, while normal-face upper case $(e.g.\hspace{1mm} X \in {\mathbb R}^{n \times m})$ represents matrices. $\mathbb{1}$ represents all-ones vector. Let $\mathcal{T} \subseteq \{1,\dots, n\}$, then for a matrix $X \in {\mathbb R}^{n \times m}$, $X_{\mathcal{T}} \in {\mathbb R}^{|\mathcal{T}| \times m}$ is the sub-matrix obtained by extracting the rows of $X$ corresponding to the indices in $\mathcal{T}$. $\mathcal{T}^c$ denotes the complement of a set $\mathcal{T}$ and the universal set on which it is defined will be clear from the context. The symbol $\circ$ denotes element-wise multiplication of two vectors and is defined as $\mathbf{z} = \mathbf{x} \circ \mathbf{y}$, where $\mathbf{z}_{i} = \mathbf{x}_{i} \cdot \mathbf{y}_{i}$. The symbol $*$ denotes the convolution operator for vectors. $\textsf{supp}(\mathbf{x})$ donotes the support of the vector $\mathbf{x}$ given by the set $\mathcal{T}=\textsf{supp}(\mathbf{x}) = \{i|\mathbf{x}_i \neq 0\}$. $\textsf{argsort} \downarrow (\mathbf{x})$ denotes a function that returns the sorted indices of vector x in descending order. The space of all square integrable signals is denoted by $\mathcal{L}_2$. The space of all point-wise bounded signals is denoted by $\mathcal{L}_\infty$.
\end{nomenclature}

%%%%%%%%%%%%%%%%%%%%%%%%%%%%%%%%%%%%%%%%%%%%%%%%%%%%%%%%%%%%%%%%%%%%%%
\section{INTRODUCTION}
%%%%%%%%%%%%%%%%%%%%%%%%%%%%%%%%%%%%%%%%%%%%%%%%%%%%%%%%%%%%%%%%%%%%%%%%%%%%%%%%%%%%%%%
%%%%%%%%  Why is resilient observer useful for wheeled mobile robot?  %%%%%%%%%%%%%%%

Nonholonomic wheeled mobile robots (WMRs) have attracted much attention in the past two decades due to its great mobility and the broad range of applications\cite{roy2015robust}. Quite a lot of researchers have developed  path-tracking controllers for wheeled mobile robots considering nonlinearities \cite{kim1999tracking, oriolo2002wmr, d1992dynamic}, robustness against model uncertainties\cite{dixon2000robust, aguiar2007trajectory}, robustness against noise\cite{cortesao2003kalman,coelho2005path}.
The control strategies depend on the measurements of the robots' velocities and/or location coordinates. However, due to the increasing dependence on IoT devices and wireless communication, the resulting tight coupling of computation, communication and physical components enables malicious agents to inject attacks via the sensors and actuators\cite{pajic2017design}. Consequently, controller would make decision based on attacked measurements or the vehicle would receive malicious control signals. One type of attacks, false data injection attacks (FDIAs), has been shown to be capable of fooling bad data detection (BDD) scheme to compromise the integrity of the state estimator, even with very sparse measurements corruption. This results in false operations of the whole system without any alarm \cite{mo2010false,mo2010fals}. Therefore, it is necessary to develop an attack-resilient observer-based control scheme to mitigate the effect of those attacks. 

%%%%%%%%%%%%%%%%%%%%%%%%%%%%%%%%%%%%%%%%%%%%%%%%%%%%%%%%%%%%%%%%%%%%%%%%%%%%%%%%%%%%%%%
%%%%%%%%%%%  recent references on resilient observer for wheeled-mobile robots  %%%%%%%%%%%

 Many authors \cite{pajic2017design, fawzi2014secure, anubi2019resilient} have proposed an $\ell_0$-based resilient state estimators with different modifications or under different scenarios. These estimators have been validated using cruise control of autonomous ground vehicle, electrical power systems, industrial control systems. However, with the exception of \cite{anubi2019resilient}, none of the estimators were validated against large percentage FDIA. Also, in \cite{guo2017exploiting}, a robot intrusion detection system (RIDS) is designed by leveraging physical dynamics of mobile robots. However, the detection engine is a residual-based Chi-square scheme, which is known to be vulnerable to coordinated FDIAs considered in this paper.
%%%%%%%%%%%%%%%%%%%%%%%%%%%%%%%%%%%%%%%%%%%%%%%%%%%%%%%%%%%%%%%%%%%%%%%%%%%%%%%%%%%%%%%%
%%%%%%%%%%%  introduce to the research of this paper  %%%%%%%%%%%%%%%%%%%%%%%%%%%%%%%%%%

Inspired by recent developments in estimation and compressive sensing, we propose a pruning algorithm to mitigate the effect of FDIA on UKF. Consider a linear measurement model under attack:
$$\mathbf{y}=H \mathbf{x} +\mathbf{e},$$
where, $H \in {\mathbb R}^{m \times n}$ is the linear measurement operator, $\mathbf{x} \in {\mathbb R}^n$ is the state vector, $\mathbf{y} \in {\mathbb R}^m$ is the attacked measurement corrupted by a sparse attack vector $\mathbf{e} \in {\mathbb R}^m$. Consequently, attack-resilient estimation is often formulated as a classical error correction problem\cite{fawzi2014secure, anubi2018robust, anubi2019resilient, pajic2017design}: 
\begin{align*}\textsf{Minimize}: \|\mathbf{e}\|_{\ell_0} \hspace{3mm} \textsf{Subject to:} \hspace{2mm} \mathbf{y}=F\mathbf{e},\end{align*}
where $F \in {\mathbb R}^{n \times m}$ is a coding matrix with $n \ll m$ and $FH=\textbf{0}$. It is known\cite{candes2005decoding, candes2006stable} that if the number of attacked nodes is small enough, exact state estimation can be guaranteed by solving the above problem. However, it is shown\cite{pajic2016attack} that exact recovery is unattainable by solving the problem above if more than 50\% of the sensor nodes are attacked. Moreover, the $\ell_0$ optimization problem above is NP-hard and is often relaxed by solving a convex problem if coding matrix satisfies \emph{Restricted Isometry Property} (RIP) \cite{candes2005decoding, candes2008restricted}.

Suppose there is an oracle which gives the exact $\textsf{supp}(\mathbf{e})$ a priori, then the resilient state estimation problem becomes trivial since any decent regression algorithm will be able to recover the states exactly from the non-attacked set. The challenge, however, is that no such oracle exists. Although, there is a host of localization algorithms\cite{mestha2019cyber} designed to serve this purpose, they are always not exact with significant \emph{false positive and false negative rates}. This observation is the central motivation for developing the pruning algorithm. Therefore, the pruning problem to increase the \texttt{signal-to-attack-ratio} of the measurement system using any pre-designed inexact attack localization scheme (subsequently referred to as the \texttt{oracle}). Then the existing least-square based robust estimation algorithms can be implemented for the \emph{pruned} measurements sets to create a resilient estimator. This process requires a certain amount of redundancy in the measurement system. Otherwise, the estimation problem will be rendered under-determined by the pruning process. Quantifying the required redundancy level for a given oracle is beyond the scope of this present work and will be addressed in future work.

%%%%%%%%%%%%%%%%%%%%%%%%%%%%%%%%%%%%%%%%%%%%%%%%%%%%%%%%%%%%%%%%%%%%%%%%%%%%%%%%%%%%%%%%%%%%%%%%%%  recent references on resilient Kalman filter design  %%%%%%%%%%%%%%%%%%%%%%
Although, there is a lot of work in the literature on resilient Kalman filtering, typical least-square  based  robust  estimator,  mitigating sensors failures, distortion, delay, strong noise interference and more reasons for corrupt signals\cite{wang2014stochastically, mahmoud2007resilient, qu2013optimal}. However, the specific characteristics of attack, unbounded but sparse, make those resilient filters be hard to perform attack-resiliently. To the best of the authors' knowledge, this paper represents one of the earliest approach to prune measurement channels in real-time in order to improve the resiliency of an underlying observer against FDIA.

%Xin Wang el. proposed a resilient extended kalman filter through considered zero-mean and bounded uncertainty of observer gain into the estimation error dynamics \cite{wang2014stochastically}. 

%%%%%%%%%%%%%%%%%%%%%%%%%%%%%%%%%%%%%%%%%%%%%%%%%%%%%%%%%%%%%%%%%%%%%%%%%%%%%%%%%
%%%%%%%%%%%%%   organization of the paper   %%%%%%%%%%%%%%%%%%%%%%%%%%%%
The rest of paper is organized as follows. In Section~\ref{sec:path_tracking_control}, a two-layer controller is designed, with UKF, to track a reference trajectory with noisy measurement system. In Section~\ref{sec:FDIA}, an optimization-based FDIA algorithm designed to bypass the monitor is also implemented. In Section~\ref{sec:Resilient_pruning}, the channel pruning algorithm is developed and combined with traditional UKF to create a resilient observer. In Section~\ref{sec:Simulation}, simulation results are presented to validate the proposed pruning-based resilient observer. In Section~\ref{sec:Conclusion}, concluding remarks and future directions are given.
%%%%%%%%%%%%%%%%%%%%%%%%%%%%%%%%%%%%%%%%%%%%%%%%%%%%%%%%%%%%%%%%%%%%%%
\section{path-tracking control for DDWMR}\label{sec:path_tracking_control}
In this section, we present a basic two-layer observer-based path tracking controller for a differential-driven wheeled mobile robot (DDWMR). This will be the platform where subsequent pruning algorithm and FDIA are implemented. Figure.~\ref{DDWMR} shows the schematic of the DDWMR considered in this paper.

\begin{figure}[t]
\begin{center}
\includegraphics[scale=0.5]{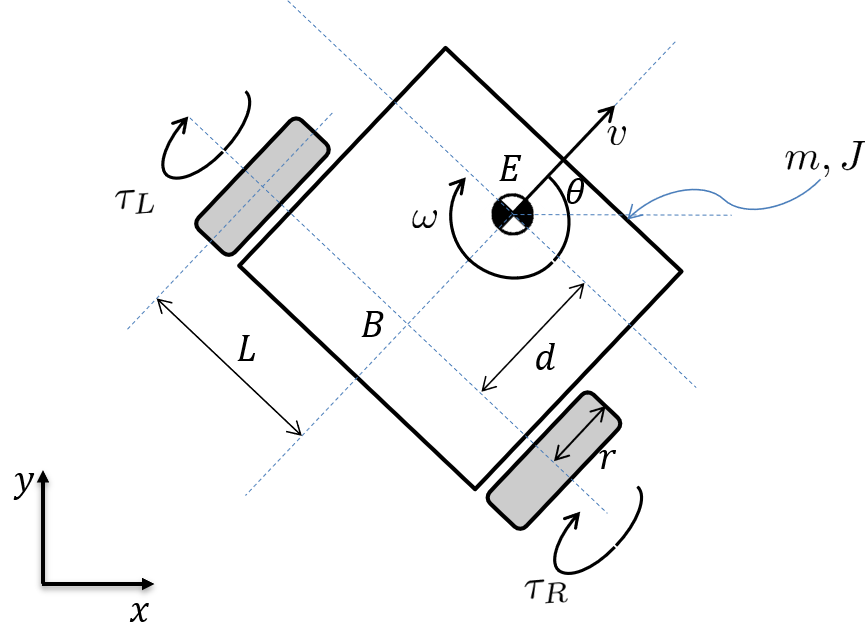}
\end{center}
\caption{Schematic Diagram of the Considered DDWMR Showing Relevant Kinematic and Geometric Quantities }
\label{DDWMR} 
\end{figure}

\begin{figure}[t]
\begin{center}
\includegraphics[scale = 0.35]{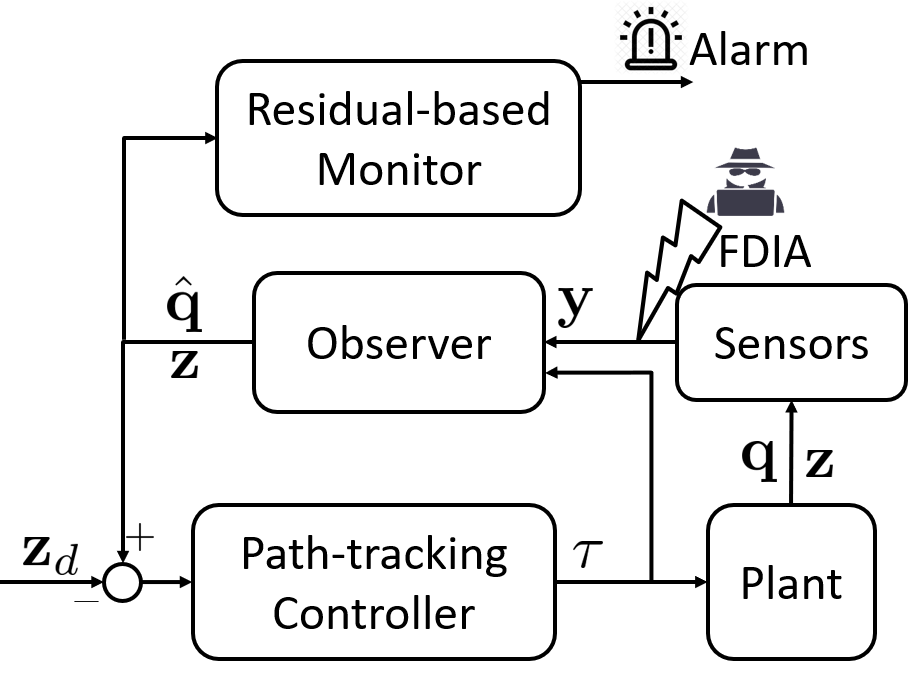}
\end{center}
\caption{Schematic Diagram of the two-layer observer based control system and the attack injection}
\label{2_layer} 
\end{figure} 

The dynamic and kinematic models of DDWMR are given by\cite{dhaouadi2013dynamic}:
\begin{equation}
\begin{aligned}
    &\dot{\mathbf{q}} =  M^{-1} (- D \mathbf{q} + B \mathbf{\tau})+\mathbf{w} \triangleq g(\mathbf{x},\mathbf{u}) +\mathbf{w}\\
    &\begin{bmatrix}\dot{\theta} \\\cdots\\ \dot{\mathbf{z}}\end{bmatrix} =\begin{bmatrix}\begin{array}{cc}0\hspace{1mm}&\hspace{1mm}1\end{array}\\ \cdots \\ \huge C(\theta) \end{bmatrix} \mathbf{q} \triangleq \bar{C}(\theta) \mathbf{q},
\label{model}
\end{aligned}
\end{equation}
where, $\mathbf{q}=\left[v\hspace{2mm} \omega\right]^\top$ is the generalized body velocities vector, $\mathbf{u} \triangleq \mathbf{\tau}=[\tau_R\hspace{2mm}\tau_L]^\top$ is a vector of the wheels torques, and $\mathbf{z} =[x \hspace{2mm} y]^{\top}$ is the task-space position vector, $\mathbf{x}=\left[\theta\hspace{2mm} v\hspace{2mm} \omega\right]^\top$ is defined as a state vector, $\mathbf{w} \sim \mathcal{N}(0,R)$ is the process noise in dynamics. 

The kinematic and dynamical parameters are given by:
$$\begin{aligned}
M&=\begin{bmatrix} m & 0 \\ 0 & m d^2+J \end{bmatrix}, \hspace{2mm}
D=\begin{bmatrix} 0 & -m d\omega \\ m d\omega & 0 \end{bmatrix} \\
B&=\frac{1}{r}\begin{bmatrix} 1 & 1 \\ L & -L\end{bmatrix},\hspace{2mm}
C(\theta) =  \begin{bmatrix}\cos(\theta)& -d \sin(\theta) \\ \sin(\theta) & d \cos(\theta) \end{bmatrix}.
\end{aligned}
$$

%Kinematic model for DDMR on the point B is:
%\begin{equation}
%    \dot{\mathbf{z}} = A(\mathbf{z}) \mathbf{q}
%\end{equation}
%Where, \hspace{0.2cm} $\mathbf{z} = [\theta \hspace{2mm} x \hspace{2mm} y]^{\top}, 
%A(\mathbf{z}) = \begin{bmatrix} 0 & 1 \\ cos(\theta) & 0 \\ sin(\theta) & 0 \end{bmatrix}$

Given a reference task-space trajectory $[\theta_d(t) \hspace{2mm} \mathbf{z}_d(t)^\top]^\top$, where $\mathbf{z}_d(t) \in {\mathbb R}^2$ is the corresponding planar Cartesian coordinates of the desired trajectory. We assume that $\mathbf{z}_d(t)$ is continuously differentiable with bounded derivatives, and that all its derivative up to the $2$nd order are known.
Next, consider the tracking error given by
\begin{align}
    \widetilde{\mathbf{e}} = \begin{bmatrix}\theta - \theta_d \\ \mathbf{z}-\mathbf{z}_d\end{bmatrix} = \begin{bmatrix}\mathbf{e}_{\theta} \\ \mathbf{e}_{\mathbf{z}}\end{bmatrix}.
    \label{error_define}
\end{align}
%Then, the control law is presented in the following, and the overall schematic is shown in Fig. ~\ref{2_layer}
Then, the control law is then designed as:
\begin{equation}
    \mathbf{\tau}=B^{-1}(M \mathbf{u}+ D \mathbf{q}),
    \label{controller}
\end{equation}
where, 
\begin{align*}
    \mathbf{u}= -k_q(\mathbf{q}-\mathbf{q}_d)+\dot{\mathbf{q}}_d - \bar{C}(\theta)^{\top} \widetilde{\mathbf{e}}
\end{align*}
with
$$\begin{aligned}
\mathbf{q}_d&=C^{-1}(\theta)(\dot{\mathbf{z}}_d-k_e \mathbf{e}_{\mathbf{z}}) \\ 
\dot{\mathbf{q}}_d &= -k_e(\dot{C}^{-1}(\theta) \mathbf{e}_{\mathbf{z}} +\mathbf{q})+ C^{-1}(\theta)[ \ddot{\mathbf{z}}_d+(k_e+C(\theta)\dot{C}^{-1}(\theta))\dot{\mathbf{z}}_d]
\end{aligned}
$$
and $k_q$, $k_e$ are positive scalar control gains.

\begin{proposition}
Consider the control law given in (\ref{controller}), if control gains $k_q>0$ and $k_e>0$, then the tracking errors in (\ref{error_define}) converges to zero asymptotically. Moreover, the generalized velocities tracking error $\widetilde{\mathbf{q}}=\mathbf{q}-\mathbf{q}_d$ converges to zero asymptotically with $\dot{\mathbf{z}}_d=C(\theta)\mathbf{q}_d$ satisfied in the limit.
\end{proposition}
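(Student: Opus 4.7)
The plan is to collapse the closed loop under (\ref{controller}) to a clean error system in $(\widetilde{\mathbf{q}}, \widetilde{\mathbf{e}})$, and then attack it with a quadratic Lyapunov function exploiting the cancellation built into the $-\bar{C}(\theta)^{\top}\widetilde{\mathbf{e}}$ feedback term.

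First I would substitute $\tau = B^{-1}(M\mathbf{u}+D\mathbf{q})$ into (\ref{model}). Because $M$ and $B$ are invertible and the $D\mathbf{q}$ terms cancel, the noise-free closed-loop velocity dynamics reduce to $\dot{\mathbf{q}} = \mathbf{u}$, so subtracting $\dot{\mathbf{q}}_d$ immediately yields
\begin{equation*}
\dot{\widetilde{\mathbf{q}}} = -k_q\widetilde{\mathbf{q}} - \bar{C}(\theta)^{\top}\widetilde{\mathbf{e}}.
\end{equation*}
Differentiating $\widetilde{\mathbf{e}}$ and using $[\dot{\theta},\dot{\mathbf{z}}^{\top}]^{\top} = \bar{C}(\theta)\mathbf{q}$ together with $C(\theta)\mathbf{q}_d = \dot{\mathbf{z}}_d - k_e \mathbf{e}_{\mathbf{z}}$ (by definition of $\mathbf{q}_d$) gives
\begin{equation*}
\dot{\widetilde{\mathbf{e}}} = \bar{C}(\theta)\widetilde{\mathbf{q}} + \begin{bmatrix} \omega_d - \dot{\theta}_d \\ -k_e \mathbf{e}_{\mathbf{z}} \end{bmatrix},
\end{equation*}
where $\omega_d = [0\ 1]\mathbf{q}_d$. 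I would then assume the reference is nonholonomically feasible so that $\dot{\theta}_d = \omega_d$, which kills the residual in the first component.

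With the Lyapunov candidate $V = \tfrac{1}{2}\|\widetilde{\mathbf{q}}\|^2 + \tfrac{1}{2}\|\widetilde{\mathbf{e}}\|^2$, the cross terms $-\widetilde{\mathbf{q}}^{\top}\bar{C}(\theta)^{\top}\widetilde{\mathbf{e}}$ and $\widetilde{\mathbf{e}}^{\top}\bar{C}(\theta)\widetilde{\mathbf{q}}$ annihilate identically, which is the whole point of including $-\bar{C}(\theta)^{\top}\widetilde{\mathbf{e}}$ in $\mathbf{u}$. What remains is
\begin{equation*}
\dot{V} = -k_q\|\widetilde{\mathbf{q}}\|^2 - k_e\|\mathbf{e}_{\mathbf{z}}\|^2 \le 0.
\end{equation*}
Boundedness of all errors follows, so $\widetilde{\mathbf{q}},\mathbf{e}_{\mathbf{z}} \in \mathcal{L}_2$; uniform continuity (since their derivatives are bounded along trajectories) plus Barbalat's lemma delivers $\widetilde{\mathbf{q}}(t) \to 0$ and $\mathbf{e}_{\mathbf{z}}(t) \to 0$. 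The limit identity $\dot{\mathbf{z}}_d = C(\theta)\mathbf{q}_d$ then drops out of the definition of $\mathbf{q}_d$ as $\mathbf{e}_{\mathbf{z}} \to 0$.

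The real obstacle is recovering $\mathbf{e}_\theta \to 0$: it never enters $\dot{V}$, so the Lyapunov estimate alone is silent on it. The plan is to exploit the structure of $\dot{\widetilde{\mathbf{q}}}$: once $\mathbf{e}_{\mathbf{z}} \to 0$, the forcing term $\bar{C}(\theta)^{\top}\widetilde{\mathbf{e}}$ collapses to $[0,\ \mathbf{e}_\theta]^{\top}$, and combining this with $\widetilde{\mathbf{q}}, \dot{\widetilde{\mathbf{q}}} \to 0$ (through a time-varying LaSalle or Matrosov-type invariance argument) pins $\mathbf{e}_\theta$ to zero. This is the step that requires the most care and implicitly relies on the kinematic feasibility of the reference trajectory.
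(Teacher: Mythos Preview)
Your approach coincides with the paper's: the same Lyapunov candidate $V = \tfrac{1}{2}\|\widetilde{\mathbf{q}}\|^2 + \tfrac{1}{2}\|\widetilde{\mathbf{e}}\|^2$, the same cross-term cancellation via the $-\bar{C}(\theta)^{\top}\widetilde{\mathbf{e}}$ feedback, the same $\dot{V} = -k_q\|\widetilde{\mathbf{q}}\|^2 - k_e\|\mathbf{e}_{\mathbf{z}}\|^2$, and the same $\mathcal{L}_2 \cap \mathcal{L}_\infty$ plus Barbalat route. Where you diverge is in rigor rather than strategy: the paper simply asserts $\widetilde{\mathbf{e}}(t)\to 0$ at the Barbalat step without isolating $\mathbf{e}_\theta$, and it leaves the compatibility condition $\dot{\theta}_d = \omega_d$ implicit in its formula $\dot{\widetilde{\mathbf{e}}} = \bar{C}(\theta)\widetilde{\mathbf{q}} - k_e[0;\mathbf{e}_{\mathbf{z}}]$. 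You correctly flag that $\mathbf{e}_\theta$ never appears in $\dot{V}$ and that a secondary invariance-type argument is needed to recover it; this is the more honest treatment, and the paper's own proof is actually sketchier than yours on precisely this point.
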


\begin{proof}
Consider the candidate Lyapunov function:
\begin{equation}
    V=\frac{1}{2}\| \widetilde{\mathbf{q}}\|^2+\frac{1}{2}\|\widetilde{\mathbf{e}}\|^2
    \label{lyapunov}
\end{equation}

taking the first time derivative and substituting (\ref{model}), (\ref{error_define}), (\ref{controller}) yields 
\begin{equation}
\begin{aligned}
    \dot{V} =& \widetilde{\mathbf{q}}^{\top}\bigg(-k_q\widetilde{\mathbf{q}}-\begin{bmatrix}0\\\mathbf{e}_{\theta}\end{bmatrix}-C(\theta) \mathbf{e}_{\mathbf{z}}\bigg) + \mathbf{e}^{\top}_{\theta} \dot{\mathbf{e}}_{\theta}+\mathbf{e}^{\top}_{\mathbf{z}}\big(C(\theta)\mathbf{q}_d-\dot{\mathbf{z}}_d\big)\\
     =& -k_q \|\widetilde{\mathbf{q}}\|^2 - (\omega-\omega_d)\mathbf{e}_{\theta} - \mathbf{e}^{\top}_{\mathbf{z}}C(\theta)(\mathbf{q}-\mathbf{q}_d)+\mathbf{e}^{\top}_{\theta} \dot{\mathbf{e}}_{\theta} \\
     &+\mathbf{e}^{\top}_{\mathbf{z}}(C(\theta)\mathbf{q}-\dot{\mathbf{z}}_d)\\
    =& -k_q \|\widetilde{\mathbf{q}}\|^2 + \mathbf{e}^{\top}_{\mathbf{z}}(C(\theta)\mathbf{q}_d-\dot{\mathbf{z}}_d)\\
    =& -k_q \|\widetilde{\mathbf{q}}\|^2-k_e\|\mathbf{e}_{\mathbf{z}}\|^2
    \end{aligned}
    \label{derivative}
\end{equation}

This implies that $\dot{V}$ is negative semi-definite, and since $V$ is positive, it follows that $V \in \mathcal{L}_{\infty}$. From (\ref{lyapunov}), it follows that $\widetilde{\mathbf{q}}, \widetilde{\mathbf{e}} \in \mathcal{L}_{\infty}$, which also implies that $\mathbf{e}_{\theta} \in \mathcal{L}_{\infty}$.

Integrating (\ref{derivative}) yields 
$$V-V(0) \leq -\int_0^t \big(k_q \|\widetilde{\mathbf{q}}(\tau)\|^2+k_e\|\mathbf{e}_{\mathbf{z}}(\tau)\|^2\big)d\tau
$$
from which it follows that $\widetilde{\mathbf{q}}, \mathbf{e}_{\mathbf{z}} \in \mathcal{L}_2$. Also, $\dot{\widetilde{\mathbf{q}}}=-k_q \widetilde{\mathbf{q}} - \bar{C}(\theta)^{\top}\widetilde{\mathbf{e}} \in \mathcal{L}_{\infty}$ and $\dot{\widetilde{\mathbf{e}}}=\bar{C}(\theta)\widetilde{\mathbf{q}} -k_e \begin{bmatrix}0 \\ \mathbf{e}_{\mathbf{z}}\end{bmatrix} \in \mathcal{L}_{\infty}$, which implies that $\widetilde{\mathbf{e}}$ and $\widetilde{\mathbf{q}}$ are uniformly continuous. Thus, by Barbalat's Lemma \cite{barbalat1959systemes}, it follows that
$$\widetilde{\mathbf{e}}(t) \rightarrow 0, \widetilde{\mathbf{q}}(t) \rightarrow 0$$
\end{proof}

\section{False data injection attack}\label{sec:FDIA}
\newtheorem{theorem}{Theorem}[section]
\newtheorem{definition}{Definition}
\newtheorem{corollary}{Corollary}[theorem]
\newtheorem{lemma}[theorem]{Lemma}
\newtheorem{remark}{Remark}

An attacker can inject false data computed based on a partial or complete knowledge of system model, in order to covertly and accurately change the physical behavior of the plant\cite{7866869}. This section gives the notion of a monitor used in this paper. Based on the monitor, we give a design of FDIA algorithm while assuming an attacker has complete knowledge of system.

For the DDWMR described in previous section, we consider a redundant measurement system of the form:

\begin{equation}\label{eqn:meas_model}
    \mathbf{y}  =  \begin{bmatrix} 1 & 0 \\ 0 & 1 \\ 1/4r & L/4r \\ 1/4r & -L/4r \\ \cos(\theta) & -d \sin(\theta) \\ \sin(\theta) & d \cos(\theta) \end{bmatrix} \cdot \mathbf{q} +\mathbf{v} \triangleq f(\mathbf{x}) + \mathbf{v}
\end{equation}
consisting of both linear and nonlinear components, where $\mathbf{x}=\left[\theta\hspace{2mm} v\hspace{2mm} \omega\right]^\top$ is defined as a state vector, and $\mathbf{v}$ denotes measurement noises.

\begin{definition}[Residual-based Monitor of Horizon $T$]

Based on the closed-loop system in Figure.~\ref{2_layer}, a monitor scheme is any mapping of the form:
$$\Psi_T : \{Y_T, U_T\} \mapsto \{\Psi_1,\Psi_2\}$$
where, $Y_T = \in {\mathbb R}^{m \times T}, U_T \in {\mathbb R}^{l \times T}$ are historical measurements and controlled inputs for $T$ horizon respectively, $\Psi_1 = \{0 (safe), 1(unsafe)\}$ is the first output argument indicating whether or not the data contains attacks, $\Psi_2 = 2^{\{1,2,\cdots, m\}}$ is the second output argument indicating the support of attacks' location.
\end{definition}

 The monitor outputs $\Psi_1 = \{0\} $ for any measurement vector history $\mathbf{Y}_T = [\mathbf{y}_k, \mathbf{y}_{k-1}, \cdots, \mathbf{y}_{k-T+1}]$ and corresponding control history $\mathbf{U}_T=[\mathbf{\tau}_{k-1} \hspace{2mm} \mathbf{\tau}_{k-2} \cdots \mathbf{\tau}_{k-T}]$ if there exists estimate history $\hat{\mathbf{X}}_T=[\hat{\mathbf{q}}_k, \hat{\mathbf{q}}_{k-1}, \cdots, \hat{\mathbf{q}}_{k-T}]$ such that
$$\begin{aligned} \|\hat{\mathbf{q}}_{j+1}-g(\hat{\mathbf{q}}_{j},\mathbf{\tau}_{j})\| \leq \varepsilon_w, \hspace{0.3cm} j=k-T, \cdots, k-1\\
\|\mathbf{y}_j - f(\hat{\mathbf{q}}_{j})\| \leq \varepsilon_v, \hspace{0.3cm} j=k-T+1, \cdots, k
\end{aligned}$$
where $\varepsilon_w$ and $\varepsilon_v$ are any real numbers related to process noise and measurement noise.

Otherwise, the monitor outputs $\Psi_1 = \{1\} $ and the support of the sparsest attack vector history $E_T = \{\mathbf{e}_k, \mathbf{e}_{k-1}, \cdots, \mathbf{e}_{k-T+1}\}$ such that
$$\begin{aligned}
      \|\hat{\mathbf{q}}_{j+1}-g(\hat{\mathbf{q}}_{j},\mathbf{\tau}_{j})\| \leq \varepsilon_w, \hspace{0.3cm} j=k-T, \cdots, k-1\\
\|\mathbf{y}_j - f(\hat{\mathbf{q}}_{j})-\mathbf{e}_j\| \leq \varepsilon_v, \hspace{0.3cm} j=k-T+1, \cdots, k
\end{aligned}
$$  

After linearizing \eqref{eqn:meas_model} about the operating point $\mathbf{x}_0 = [\theta_0 \hspace{2mm} v_0 \hspace{2mm} \omega_0]^{\top}$, we discretize it using Euler's approximation with a sampling time $T_s$, and iterate forward $T_f$ samples, one obtains:
\begin{equation}
    Y_f = H \mathbf{x}_k+G \mathbf{u}_f+e
    \label{measurement}
\end{equation}
where, $ Y_f = \left[\mathbf{y}_k \hspace{2mm} \mathbf{y}_{k+1} \hspace{2mm} \cdots \hspace{2mm} \mathbf{y}_{k+T_f}\right]^{\top}$,
$$
    H = \begin{bmatrix}
            C_d\\
            C_d A_m\\
            C_d A_m^2 \\
            \vdots\\
             C_d A_m^{T_f}
        \end{bmatrix},  G = T_s\begin{bmatrix} 0 & 0 & \cdots & 0\\ C_dB_m & 0 &\cdots& 0\\ C_dA_mB_m& C_dB_m&\cdots & 0 \\ \vdots & \vdots & \ddots & \vdots \\ C_dA_m^{T_f-1}B_m &C_dA_m^{T_f-2}B_m &\cdots & C_dB_m \end{bmatrix}
$$
with
$$
A_m = I + T_s \cdot \begin{bmatrix} 0 &0&1 \\ 0 &0 &2d \omega_0 \\ 0 & -\frac{md\omega_0}{md^2+J} & -\frac{mdv_0}{md^2+J}\end{bmatrix}, \hspace{2mm} B_m = T_s \begin{bmatrix} 0 \\ M^{-1}B \end{bmatrix},$$
$$C_d = \begin{bmatrix} 0 & 1 & 0 \\
                      0 & 0 & 1 \\
                      0 & 1/4r & L/4r \\
                      0 & 1/4r & -L/4r \\
                      -v_0\sin(\theta_0)-d\omega_0\cos(\theta_0) & \cos(\theta_0) & -d\sin(\theta_0) \\
                      v_0\cos(\theta_0)+d\omega_0\sin(\theta_0) & \sin(\theta_0) & -d\cos(\theta_0) \\
      \end{bmatrix}$$

Let $H$ admits the singular value decomposition:
$$H=[U_1 \hspace{2mm} U_2]\begin{bmatrix} \sum_1 \\0\end{bmatrix}V,
$$
where, $U_1 \in {\mathbb R}^{m \times n}$, 
$U_2 \in {\mathbb R}^{m \times (m-n)}$, 
$\sum_1 = \text{diag}(\sigma_1,\sigma_2,\cdots, \sigma_n)$, and  
$V \in {\mathbb R}^{n \times n}$, it is obvious that, the FDIA would pass the monitor if the attack vector $\mathbf{e}$ is defined such that the attack measurement $\mathbf{y}_a$ is in the \emph{range space} of the observation matrix $H$ ($ = \textsf{Range}(U_1)$ ).
Consequently, the FDIA is generated by solving the optimization problem:
\begin{equation}
    \begin{aligned}
\textsf{Maximize} \quad & \|{U_{1_\mathcal{T}}}_{.}^{\top} \mathbf{y}_A\| ^2 \\
     \textsf{Subject to} \quad & \|{U_{2_\mathcal{T}}}_{.}^{\top} \mathbf{y}_A \|^2 \leq \alpha
\end{aligned}
\label{optimization}
\end{equation}
for a given support $\mathcal{T}$ of attack locations under upper bound of percentage of attack injection, and $\alpha$ is a threshold value related to observation matrix $H$ and monitor's threshold $\varepsilon_v$.

%Through the step 4 and step 5 in algorithm ~\ref{FDIA}, we found the optimal attacked location is $\mathcal{T}=\{5,6\}$. Then we implemented this FDIA on the control system of DDWMR with UKF, and found that the monitor identify the incorrect location, and let the attacked signals pass. In the figure~\ref{monitor}, the $y \in [0,1], x\in[0,10]$ rectangular is the first measurement channel, and other measurement channels are displayed in the same way. You would find there exist some alarms at the end time where monitor doesn't locate the measurement channel attacks, they are responsible for state transformation process error from cumulative result of those previous attacked measurements rather than monitor's operation error. In short, the monitor cannot locate the accurate attack location and also cannot always give alarm when attacks exist. Therefore, observer strategies only based on the residual-based monitor are hard to work well.

%\begin{figure}[t]
%\begin{center}
%\includegraphics[scale = 0.47]{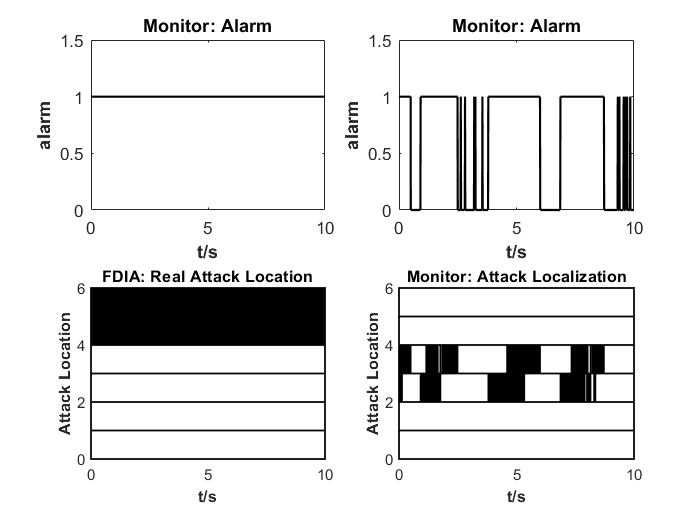}
%\end{center}
%\caption{Bad data monitor results (Alarm part (0: safe, 1: unsafe); attack location part:(black block: attack, white block: no attack))}
%\label{monitor} 
%\end{figure}

%%%%%%%%%%%%%%%%%%%%%%%%%%%%%%%%%%%%%%%%%%%%%%%%%%%%%%%%%%%%%%%%%%%%%%
\section{Resilient pruning observer design}\label{sec:Resilient_pruning}
 Data-driven attack localization algorithms \cite{abbaszadeh2019automated,sabbah2006application} are effective ways of achieving resiliency under FDIA. However, it is challenging to correctly locate all attacked nodes due to the fundamental inexactness associated with data-driven algorithms. In this section, we propose a pruning algorithm to  improve the accuracy of localization algorithms. The underlying philosophy is that if the measurement set is sufficiently redundant, a subset with reduced attacked percentage can be obtained by systematically pruning the measurement set. If the attack percentage is reduced to 0, the pruned measurement set is then used with UKF to produce an improved resilient state estimation under FDIA.

  Let the unknown actual support of safe measurements be $\mathcal{T}^{c} = supp(\mathbb{1}-\mathbf{e})$ with an indicator vector $\mathbf{q}$ given,  element-wise, as:
 \begin{equation}
     \mathbf{q}_{i}=\left\{
     \begin{array}{lr}
     1 &\quad\text{if}\hspace{0.2cm} i \in \mathcal{T}^{c} \\
     0 &\quad\text{otherwise}
     \end{array}
     \right.
 \end{equation}
 Suppose the localization oracle gives an estimated support $\hat{\mathcal{T}^{c}}$ with $\hat{\mathbf{q}}$. Then, the disagreement between the oracle and the actual support can be modeled as:
 \begin{equation}
     \mathbf{q}_{i}=\epsilon_{i}\hat{\mathbf{q}_{i}}+(1-\epsilon_{i})(1-\hat{\mathbf{q}_{i}}),
\label{uncertainty_model}
 \end{equation}
 where $\epsilon_{i}$ depicts the agreement between the estimated and actual support as follows:
 \begin{equation}
     \epsilon_{i}=\left\{
     \begin{array}{ll}
     1 & \text{ if } \hat{\mathbf{q}_{i}} = \mathbf{q}_{i}\\
     0 & \text{ if }\hat{\mathbf{q}_{i}} = 1- \mathbf{q}_{i}
     \end{array}
     \right.
\label{agreement}
 \end{equation}
 It is assumed that $\epsilon_i \sim \mathcal{B}(1,p_i)$, where $p_i$ is given by the true positive rate from the oracle ROC statistics. Moreover, one can see that $\sum_{i=1}^{m} \epsilon_{i}$ is Poisson-Binomially distributed with probability mass function given by:
  \begin{equation}
      \textsf{Pr}\bigg(\sum_{i=1}^{m} \epsilon_{i} = k-1 \bigg)=\mathbf{r}(k), k=1,\cdots,m+1
\label{PMF}
  \end{equation}
  where \cite{5461658}, $\mathbf{r} = \displaystyle\prod_{i=1}^{m} P_{i} \cdot \begin{bmatrix}
\frac{1-P_{1}}{P_{1}}\\
1 \end{bmatrix} * \begin{bmatrix}
\frac{1-P_{2}}{P_{2}}\\
1 \end{bmatrix} * \cdots * \begin{bmatrix}
\frac{1-P_{m}}{P_{m}}\\
1 \end{bmatrix}$, $\mathbf{r} \in \mathbb R_{m+1}$.

Thus, given a reliability level $\eta \in (0,1)$, we define the maximum integer $l_{\eta} \leq m$ for which oracle will correctly localize at least $l_{\eta}$ nodes with a probability of at least $\eta$:
\begin{equation}
\begin{aligned}
    l_{\eta} &= \max \bigg\{k \bigg|  \textsf{Pr} \bigg(\sum_{i=1}^{m} \epsilon_{i} \geq k \bigg)\geq \eta \bigg\} \\
    & = \max\bigg\{k \bigg|1-\sum_{i=1}^{k+1} \mathbf{r}_{i} \geq \eta \bigg\} \\
    &= \max \bigg\{k \bigg|\sum_{i=1}^{k+1} \mathbf{r}_{i} \leq 1-\eta \bigg\}
\label{reliable_number}
\end{aligned}
\end{equation}

Next, we retain the oracle output for the first $l_{\eta}$ most trusted nodes. Let $\mathbf{s} \in [0,1]^{m}$ be a vector of confidence values for the oracle output for each node, then a robust support can be estimated as:
\begin{equation}
    \hat{\mathcal{T}^{c}_{\eta}} = \hat{\mathcal{T}^{c}} \cap \big\{\textsf{argsort} \downarrow (\mathbf{p} \circ \mathbf{s})\big\}_{1}^{l_{\eta}}.
\label{pruning_alg}
\end{equation}

\begin{remark}
(\ref{reliable_number}) and (\ref{pruning_alg}) constitute a pruning scheme for which the resulting $\hat{\mathcal{T}^{c}_{\eta}}$ excludes all attacked channel with a probability larger than $\eta$,
$$ \textsf{Pr}\{\hat{\mathcal{T}^{c}_{\eta}} \cap \mathcal{T} = \emptyset\} \geq \eta.
$$
\end{remark}

Following the pruning operation, the safe measurement model used for a UKF is:
\begin{equation}
    \mathbf{y}_{\hat{\mathcal{T}^{c}_{\eta}}} = f_{\hat{\mathcal{T}^{c}_{\eta}}}(\mathbf{x}) + \mathbf{v}_{\hat{\mathcal{T}^{c}_{\eta}}}.
\label{safe_measurement_model}
\end{equation}

Following standard unscented transformation\cite{julier1997new}, we use $2n+1$ sigma points to approximate the $n$-dimensional normally distributed state $\mathbf{x}$ with assumed mean $\bar{\mathbf{x}}$ and covariance $P_{\mathbf{x}}$ as follows:
$$
 \begin{aligned}
 &\chi_0= \overline{\mathbf{x}}\\
 &\chi_i= \overline{\mathbf{x}} + (\sqrt{(\lambda +n)P_{\mathbf{x}}})_i,\hspace{2mm} i=1,\cdots,n\\
 &\chi_{i+n}= \overline{\mathbf{x}} + (\sqrt{(\lambda +n)P_{\mathbf{x}}})_{i-n}, \hspace{2mm} i=n+1,\cdots,2n
 \end{aligned}$$
The corresponding weights for the sigma points are then given by:
 $$
 \begin{aligned}
 &W_0^m = \lambda/(n+\lambda), W_0^c=W_0^m+(1-\alpha^2+\beta)\\
 &W_i=1/2(L+\lambda)
 \end{aligned}$$
where, $\lambda=\alpha^2(n+\kappa)-n$ represents how far the sigma points are away from the state, $\kappa \geq 0, \alpha \in (0,1]$, and $\beta=2$ is the optimal choice for Gaussian distribution.

Assume $\mathbf{x}_{k-1} \sim \mathcal{N}(\bar{\mathbf{x}}_{k-1}, P_{\mathbf{x},k-1})$, sigma points update through time in sequence with the pruning measurement model in (\ref{safe_measurement_model}). Moreover, according to the corresponding weight, we can predict the new time step state and calculate the new error covariances between the sigma points and the predicted state as follow:
$$
\begin{aligned}
\mathcal{X}_k^{\star}&=g(\mathcal{X}_{k-1},L_{\hat{\mathcal{T}^{c}_{\eta}}}(\hat{\mathbf{x}_k}))\\
\hat{\mathbf{x}}^{-}_k&=\sum_{i=0}^{2n}W_i\mathcal{X}_{k,i}^{\star}\\
\hat{P}_{\mathbf{x},k}&=\sum_{i=0}^{2n}W_i(\mathcal{X}_{k,i}^{\star}-\hat{\mathbf{x}}_k)(\mathcal{X}_{k,i}^{\star}-\hat{\mathbf{x}}_k)^T+R \\
 \mathcal{Y}_{k,\hat{\mathcal{T}^{c}_{\eta}}}&=f_{\hat{\mathcal{T}^{c}_{\eta}}}(\mathcal{X}_{k})
\end{aligned}$$

Next, the measurements and Kalman gains updates are given by:
$$\begin{aligned}
\hat{\mathbf{y}}_{k,\hat{\mathcal{T}^{c}_{\eta}}}&=\sum_{i=0}^{2n}W_i\mathcal{Y}_{(k,i),\hat{\mathcal{T}^{c}_{\eta}}}\\
\hat{P}_{\mathbf{y},k}&=\sum_{i=0}^{2n}W_i(\mathcal{Y}_{(k,i),\hat{\mathcal{T}^{c}_{\eta}}}-\hat{\mathbf{y}}_{k,\hat{\mathcal{T}^{c}_{\eta}}})(\mathcal{Y}_{(k,i),\hat{\mathcal{T}^{c}_{\eta}}}-\hat{\mathbf{y}}_{k,\hat{\mathcal{T}^{c}_{\eta}}})^T+Q\\
\hat{P}_{\mathbf{x}\mathbf{y}}&=\sum_{i=0}^{2n}W_i(\mathcal{X}_{k,i}^{\star}-\hat{\mathbf{x}}_k)(\mathcal{Y}_{(k,i),\hat{\mathcal{T}^{c}_{\eta}}}-\hat{\mathbf{y}}_{k,\hat{\mathcal{T}^{c}_{\eta}}})^T\\
\mathbf{K}_k&=\hat{P}_{\mathbf{x}\mathbf{y}}\hat{P}_{\mathbf{y},k}^{-1}\\
\overline{\mathbf{x}}_{k}&=\hat{\mathbf{x}}_k+\mathbf{K}_k(\mathbf{y}_{k,\hat{\mathcal{T}^{c}_{\eta}}}-\hat{\textbf{y}}_{k,\hat{\mathcal{T}^{c}_{\eta}}}), \hspace{0.2cm} P_{\mathbf{x},k}=\hat{P}_{\mathbf{x},k}-\mathbf{K}_k\hat{P}_{\mathbf{y},k}\mathbf{K}_k^T
\end{aligned}$$
where, $Q$ and $R$ are the measurement and process noise covariance matrices respectively.

In order to numerically verify that the robust support generated by (\ref{pruning_alg}) can achieve $100\%$ localization with a probability of at least $\eta$, we implemented the pruning localization algorithm in a numerical simulation with time-varying FDIAs. The results Figure.~\ref{numerical_simulation} shows that the algorithm achieves $100\%$ localization even for reliability setting $\eta = 0.5$! When the reliability is set to just $0.1$, this algorithm misses only two attacked measurement nodes.

\begin{figure}[t]
\begin{center}
\includegraphics[scale = 0.45]{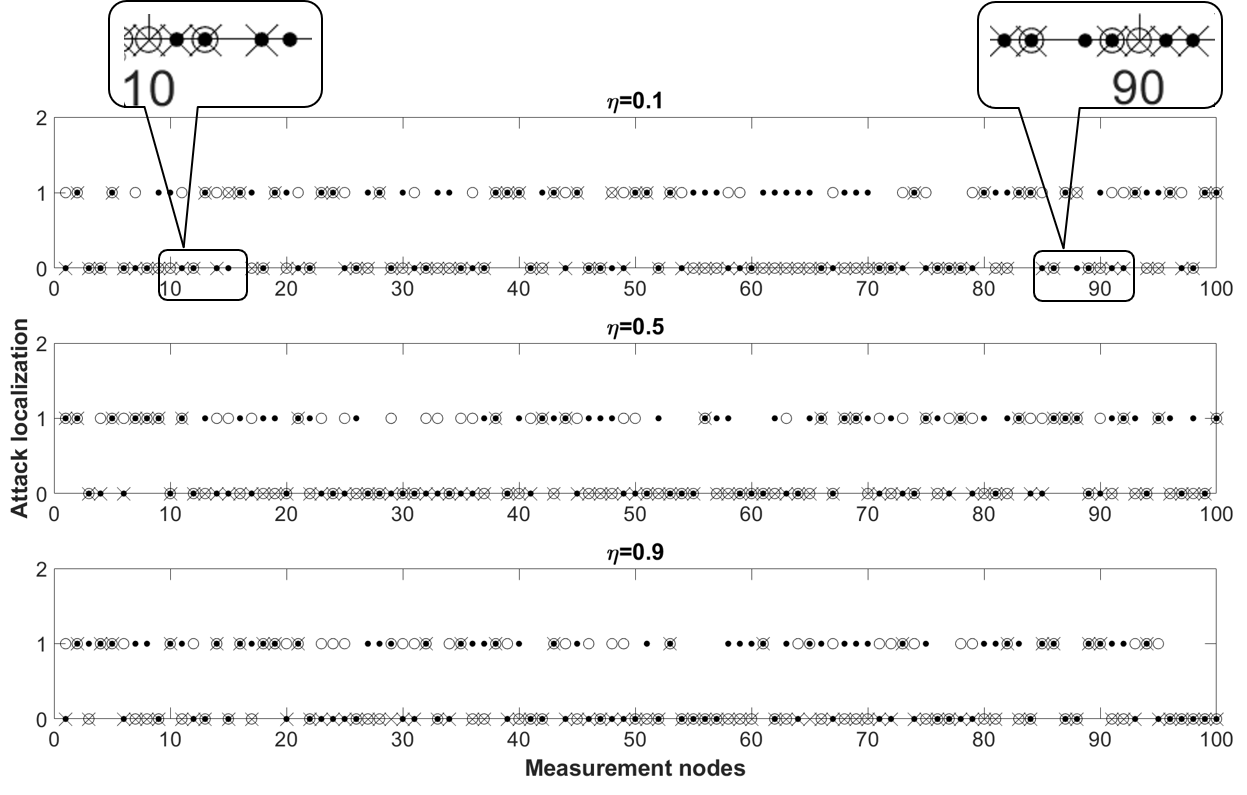}
\end{center}
\caption{Numerical Simulation of pruning algorithm with time-varying FDIAs with $\eta =0.1$, $\eta =0.5$ and $\eta =0.9$ (\textbf{cross} $\rightarrow$ pruning, \textbf{circle} $\rightarrow$ oracle, \textbf{dots at 0} $\rightarrow$ attacked nodes, the perfect result is all dots at $0$ are covered) }
\label{numerical_simulation} 
\end{figure}

%%%%%%%%%%%%%%%%%%%%%%%%%%%%%%%%%%%%%%%%%%%%%%%%%%%%%%%%%%%%%%%%%%%%%%
\section{Simulation}\label{sec:Simulation}
In this section, numerical simulation is carried out for DDWMR using three observer strategies under FDIA and the resulting path tracking performance and estimated inner states are compared. The observers compared are: (1) Only UKF, (2) UKF combine directly with the oracle and (3) the proposed pruning-based UKF.
For the path-tracking control system, the control gains are set as $k_1=k_2=10$. The nominal performance of the control system with UKF in an attack-free setting is shown in Figure~\ref{Control_result}. It is seen that the control system, together with UKF, performs well when measurement contains no attack.
\begin{figure}[t]
\begin{center}
\includegraphics[scale = 0.45]{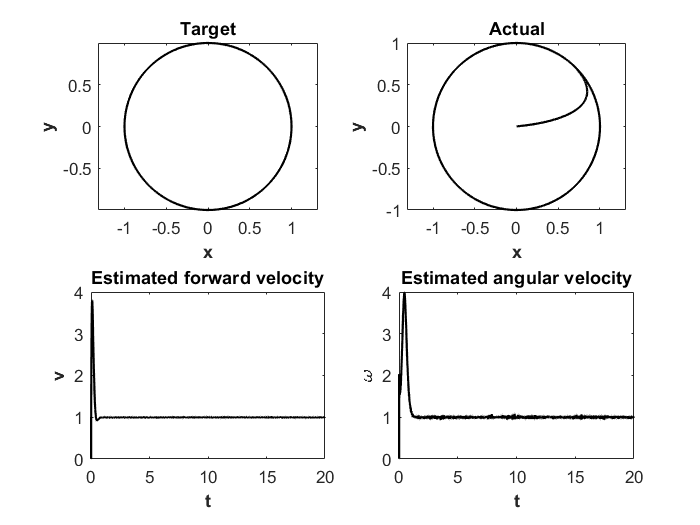}
\end{center}
\caption{Path-tracking and state estimation results of the proposed control system without attacks}
\label{Control_result} 
\end{figure}
Next, a FDIA is implemented and the generated attack vector is added to the system measurements. The oracle is simulated based on the uncertainty model in (\ref{uncertainty_model}) with defined true positive rate $r_p = 0.6$ and confidence for each node localization $s=0.5$. Localization results were then generated to match the specified ROC statistics. The pruning algorithm is implemented with $\eta = 0.8$. The codes for simulation can be found in \url{https://github.com/ZYblend/Resilient-Pruning-Observer-against-False-Data-Injection-Attacks}.

Figures~\ref{path},~\ref{v} and~\ref{w} show the comparison of the performance of three observer strategies under FDIA: "\emph{only UKF}", "\emph{UKF with machine learning}" and "\emph{pruning observer}. The results show that robot cannot track the trajectory under FDIA without any localization and pruning operation, and the estimated dynamic states has very large deviation from the true states. With the oracle, due to the uncertainty, the tracking path is very oscillatory although not as bad as with UKF alone. However, with the proposed observer, the robot was able to track the reference path very closely and smoothly.

\begin{figure}[t]
\begin{center}
\includegraphics[scale = 0.45]{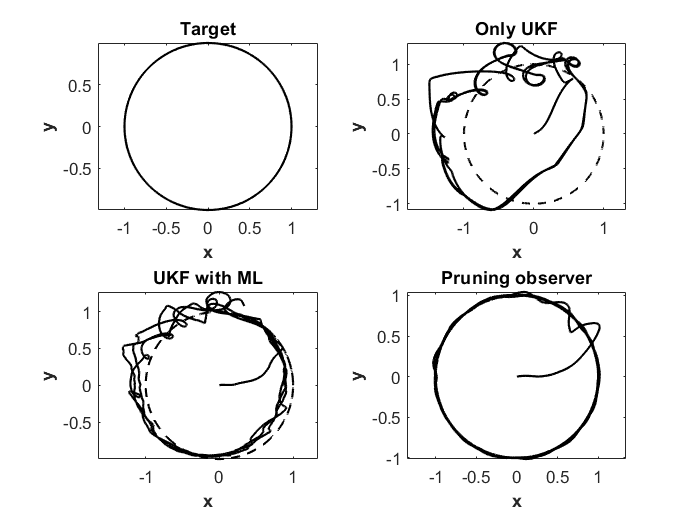}
\end{center}
\caption{A comparison of path tracking results. The proposed pruning observer-based control scheme is able to focus robot to track better, while UKF cannot handle the attacks and machine learning cannot smooth the trajectory. (dot line: reference trajectory, solid line: actual path)}
\label{path} 
\end{figure}

\begin{figure}[t]
\begin{center}
\includegraphics[scale = 0.45]{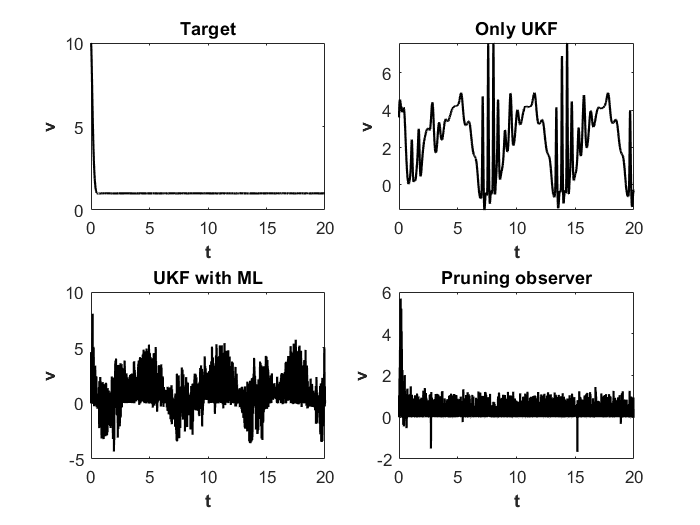}
\end{center}
\caption{A comparison of estimated forward velocity. The proposed pruning observer gives more stable and accurate estimation.}
\label{v} 
\end{figure}

\begin{figure}[t]
\begin{center}
\includegraphics[scale = 0.45]{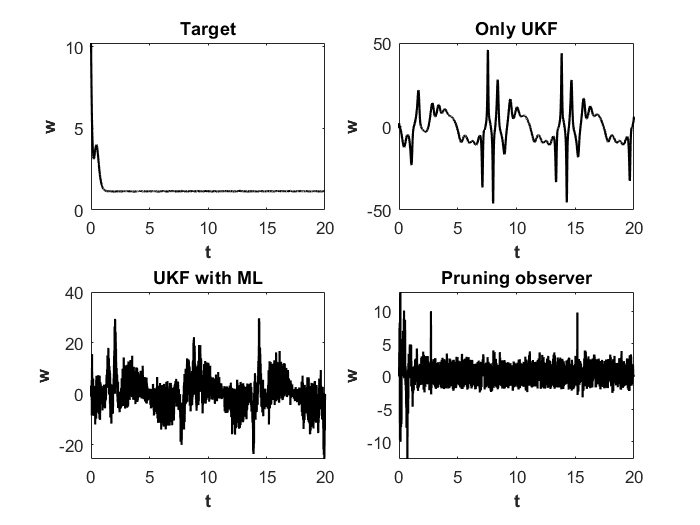}
\end{center}
\caption{A comparison of estimated angular velocity. The proposed pruning observer gives more stable and accurate estimation.}
\label{w} 
\end{figure}

\begin{figure}[t]
\begin{center}
\includegraphics[scale = 0.45]{}
\end{center}
\caption{FDIAs are localized wrongly by residual-based monitor}
\label{monitor} 
\end{figure}
%%%%%%%%%%%%%%%%%%%%%%%%%%%%%%%%%%%%%%%%%%%%%%%%%%%%%%%%%%%%%%%%%%%%%%
\section{Conclusion}\label{sec:Conclusion}
In this paper, an attack-resilient path tracking control scheme for wheeled mobile robot under an optimization-based FDIA was designed. The main contributions include: (1) Stable path-tracking control system for DDWMR, (2) Optimization-based FDIA for DDWMR, and (3) The pruning-based observer design using UKF as the underlying observer. It was shown that the proposed pruning-based observer significantly improves the signal-to-attack ratio such that the UKF is able to resiliently estimate the state of the DDWMR even when portion of the sensor measurements were subject to an FDIA. Although this paper shows how promising the resiliency boosting through pruning algorithm is, the results presented only represent the initial stages of this development. Hence there are several open problems that need to be addressed. We name a few:
\begin{enumerate}
    \item As with other resilient observers, the pruning-based resilient observer relies heavily on the inherent redundancy in the measurement system \cite{zhang2006framework}. However, there is no systematic way to quantify the level of redundancy required given any oracle. With $\ell_1$-based methods, the RIP property partly provide answers to this question. What would be interesting to see is how much of a relaxation do we get on the RIP requirements by including pruning? Partial answer to this question can be found in \cite{vaswani2010modified}. We plan to expand on the results as it applies to this problem.
    \item It would be beneficial to see some results on the potential gain by combining pruning and $\ell_1$-based methods.
    \item There are indications from this paper that it is possible to combine pruning directly with the update laws of Kalman filtering algorithms. In future, we will develop a systematic way to achieve this.
    \item We plan to generalize and identify the salient properties for a class of oracles that would combine well with a given underlying estimator.
\end{enumerate}
%%%%%%%%%%%%%%%%%%%%%%%%%%%%%%%%%%%%%%%%%%%%%%%%%%%%%%%%%%%%%%%%%%%%%%
% Here's where you specify the bibliography style file.
% The full file name for the bibliography style file 
% used for an ASME paper is asmems4.bst.
\bibliographystyle{asmems4}
%%%%%%%%%%%%%%%%%%%%%%%%%%%%%%%%%%%%%%%%%%%%%%%%%%%%%%%%%%%%%%%%%%%%%%
\begin{acknowledgment}
Thanks to Florida State University and the Center for Advanced Power Systems for remote working support on this paper during the COVID-19 outbreak. The authors wish everyone safety in these difficult times.
\end{acknowledgment}
%%%%%%%%%%%%%%%%%%%%%%%%%%%%%%%%%%%%%%%%%%%%%%%%%%%%%%%%%%%%%%%%%%%%%%
% Here's where you specify the bibliography database file.
% The full file name of the bibliography database for this
% article is asme2e.bib. The name for your database is up
% to you.
\bibliography{asme2e}
%%%%%%%%%%%%%%%%%%%%%%%%%%%%%%%%%%%%%%%%%%%%%%%%%%%%%%%%%%%%%%%%%%%%%%
%\appendix       %%% starting appendix
%\section*{Appendix A: Head of First Appendix}
%Avoid Appendices if possible.
\end{document}